\theoremstyle{plain}
\newtheorem{theorem}{Theorem}
\newtheorem{proposition}[theorem]{Proposition}
\theoremstyle{definition}
\theoremstyle{remark}
\theoremstyle{plain}
\newtheorem*{theorem*}{Theorem}
\newtheorem*{lemma*}{Lemma}
\newtheorem*{proposition*}{Proposition}
\newtheorem*{statement*}{Statement}
\newtheorem*{corollary*}{Corollary}
\theoremstyle{definition}
\newtheorem*{definition*}{Definition}
\theoremstyle{remark}
\newtheorem*{notation*}{Notation}
\newtheorem*{remark*}{Remark}
\newtheorem*{example*}{Example}
\begin{document}
\title[Spectral gaps of the one-dimensional Schr\"{o}dinger operators]
{Spectral gaps of the one-dimensional Schr\"{o}dinger operators with singular periodic potentials}

\author{Vladimir Mikhailets}
\address{Institute of Mathematics \\
  National Academy of Science of Ukraine \\
  Tereshchenkivska str., 3 \\
  Kyiv-4 \\
  Ukraine \\
  01601}
\email{mikhailets@imath.kiev.ua}

\author{Volodymyr Molyboga}
\address{Institute of Mathematics \\
  National Academy of Science of Ukraine \\
  Tereshchenkivska str., 3 \\
  Kyiv-4 \\
  Ukraine \\
  01601}
\email{molyboga@imath.kiev.ua}

\subjclass[2000]{Primary 34L40; Secondary 47A10, 47A75}
\dedicatory{In the memory of A.Ya. Povzner.}
\keywords{Hill-Schr\"{o}dinger operators, singular potentials, spectral gaps}
\begin{abstract}
The behaviour of the lengths of spectral gaps $\{\gamma_{n}(q)\}_{n=1}^{\infty}$ of the Hill-Schr\"{o}dinger operators
\begin{equation*}
  S(q)u=-u''+q(x)u,\quad u\in \mathrm{Dom}\left(S(q)\right)
\end{equation*}
with real-valued 1-periodic distributional potentials $q(x)\in H_{1\mbox{-}per}^{-1}(\mathbb{R})$ is studied. We show that
they exhibit the same behaviour as the  Fourier coefficients $\{\widehat{q}(n)\}_{n=-\infty}^{\infty}$ of the potentials $q(x)$
with respect to the weighted sequence spaces $h^{s,\varphi}$, $s>-1$, $\varphi\in \mathrm{SV}$. The case $q(x)\in L_{1\mbox{-}per}^{2}(\mathbb{R})$, $s\in \mathbb{Z}_{+}$, $\varphi\equiv 1$ corresponds to the Marchenko-Ostrovskii Theorem.
\end{abstract}

\maketitle
\section{Introduction}\label{sec_IntMR}
The Hill-Schr\"{o}dinger operators
\begin{equation*}
  S(q)u:=-u''+q(x)u,\quad u\in \mathrm{Dom}\left(S(q)\right)
\end{equation*}
with real-valued 1-periodic distributional potentials $q(x)\in H_{1\mbox{-}per}^{-1}(\mathbb{R})$ are well defined on the
Hilbert space $L^{2}(\mathbb{R})$ in the following \textit{equivalent} basic ways \cite{MiMl6}:
\begin{itemize}
  \item as form-sum operators;
  \item as quasi-differential operators;
  \item as limits of operators with smooth 1-periodic potentials in the norm resolvent sense.
\end{itemize}
The operators $S(q)$ are lower semibounded and self-adjoint on the Hilbert space $L^{2}(\mathbb{R})$. Their spectra are absolutely continuous and have a band and gap structure as in the classical case of  $L_{1\mbox{-}per}^{2}(\mathbb{R})$-potentials \cite{HrMk, Krt, DjMt3, MiMl6}.

The object of our investigation is the behaviour of the lengths of spectral gaps.  Under the assumption
\begin{equation}\label{eq_12_IntMR}
  q(x)=\sum_{k\in \mathbb{Z}}\widehat{q}(k)e^{i k 2\pi x}\in
  H_{1\mbox{-}per}^{-1+}(\mathbb{R},\mathbb{R}),
\end{equation}
that is
\begin{equation*}
    \sum_{k\in \mathbb{Z}}(1+2|k|)^{2s}|\widehat{q}(k)|^{2}<\infty\quad \forall s>-1,
    \quad\text{and}\quad \mathrm{Im}\,q(x)=0,
\end{equation*}
we will prove many terms asymptotic estimates for the lengths
$\{\gamma_{n}(q)\}_{n=1}^{\infty}$ and midpoints
$\{\tau_{n}(q)\}_{n=1}^{\infty}$ of spectral gaps of the
Hill-Schr\"{o}dinger operators $S(q)$ (Theorem \ref{th_12_IntMR}). These
estimates enable us to establish relationship between the rate of
\textit{decreasing}/\textit{increasing} of the lengths of spectral gaps
and the \textit{regularity} of the singular potentials (Theorem
\ref{th_18_IntMR}  and Theorem~\ref{th_19_IntMR}).


It is well known that if the potentials
\begin{equation*}\label{eq_14_IntMR}
  q(x)=\sum_{k\in \mathbb{Z}}\widehat{q}(k)e^{i k 2\pi x}\in L_{1\mbox{-}per}^{2}(\mathbb{R},\mathbb{R}),
\end{equation*}
i.e., when
\begin{equation*}
 \sum_{k\in \mathbb{Z}}|\widehat{q}(k)|^{2}<\infty,\quad\text{and}\quad \mathrm{Im}\,q(x)=0,
\end{equation*}
then the Hill-Schr\"{o}dinger operators $S(q)$ are lower semibounded and self-adjoint operators on the Hilbert space
$L^{2}(\mathbb{R})$ with absolutely continuous spectra which have a zone structure \cite{DnSch2, ReSi4}.

Spectra $\mathrm{spec}\,(S(q))$ are defined by the location of the endpoints
$\{\lambda_{0}(q),\lambda_{n}^{\pm}(q)\}_{n=1}^{\infty}$ of spectral gaps which satisfy the following
inequalities:
\begin{equation*}
  -\infty<\lambda_{0}(q)<\lambda_{1}^{-}(q)\leq\lambda_{1}^{+}(q)<
  \lambda_{2}^{-}(q)\leq\lambda_{2}^{+}(q)<\cdots\,.
\end{equation*}
Moreover, for even/odd numbers $n\in \mathbb{Z}_{+}$ the endpoints of spectral gaps are eigenvalues of the
periodic/semiperiodic problems on the interval $[0,1]$:
\begin{align*}
   S_{\pm}(q)u & :=-u''+q(x)u=\lambda u, \\
   \mathrm{Dom}(S_{\pm}(q)) & :=\left\{u\in H^{2}[0,1]\left|\, u^{(j)}(0)=\pm\, u^{(j)}(1),\, j=0,1\right.\right\}\equiv H_{\pm}^{2}[0,1].
\end{align*}

Spectral bands (stability or tied zones),
\begin{equation*}
  \mathcal{B}_{0}(q):=[\lambda_{0}(q),\lambda_{1}^{-}(q)],\qquad
  \mathcal{B}_{n}(q):=[\lambda_{n}^{+}(q),\lambda_{n+1}^{-}(q)],\quad n\in
  \mathbb{N},
\end{equation*}
are characterized as a locus of those real $\lambda\in \mathbb{R}$ for which all solutions of the equation $S(q)
u=\lambda u$ are bounded. On the other hand, spectral gaps (instability or forbidden zones),
\begin{equation*}
  \mathcal{G}_{0}(q):=(-\infty,\lambda_{0}(q)),\qquad
  \mathcal{G}_{n}(q):=(\lambda_{n}^{-}(q),\lambda_{n}^{+}(q)),\quad n\in
  \mathbb{N},
\end{equation*}
are a locus of those real $\lambda\in \mathbb{R}$ for which any nontrivial solution of the equation $S(q) u=\lambda u$
is unbounded.

Due to Marchenko and Ostrovskii \cite{MrOs} the endpoints of spectral gaps of the Hill-Schr\"{o}dinger operators $S(q)$
satisfy the asymptotic estimates
\begin{equation}\label{eq_18_IntMR}
  \lambda_{n}^{\pm}(q)=n^{2}\pi^{2}+\widehat{q}(0)\pm \left|\widehat{q}(n)\right|+h^{1}(n),\quad n\rightarrow\infty.
\end{equation}
As a consequence, for the lengths of spectral gaps,
\begin{equation*}
  \gamma_{n}(q):=\lambda_{n}^{+}-\lambda_{n}^{-},\quad n\in \mathbb{N},
\end{equation*}
the following asymptotic formulae are fulfilled,
\begin{equation}\label{eq_20_IntMR}
  \gamma_{n}(q)=2\left|\widehat{q}(n)\right|+h^{1}(n),\quad n\rightarrow\infty.
\end{equation}

Hochstadt \cite{Hchs} ($\Rightarrow$) and Marchenko, Ostrovskii \cite{MrOs}, McKean, Trubowitz \cite{McKTr} ($\Leftarrow$) proved that the potential $q(x)$ is an infinitely differentiable function if and only if the lengths of spectral gaps $\{\gamma_{n}(q)\}_{n=1}^{\infty}$ decrease faster than arbitrary power of $1/n$:
\begin{equation*}\label{eq_21.3_IntMR}
  q(x)\in C_{1\mbox{-}per}^{\infty}(\mathbb{R},\mathbb{R})\Leftrightarrow
  \gamma_{n}(q)=O(n^{-k}),\; n\rightarrow\infty\quad \forall  k\in \mathbb{Z}_{+}.
\end{equation*}

Marchenko and Ostrovskii \cite{MrOs} discovered that
\begin{equation}\label{eq_22_IntMR}
  q(x)\in H_{1\mbox{-}per}^{k}(\mathbb{R},\mathbb{R})\Leftrightarrow
  \{\gamma_{n}(q)\}_{n=1}^{\infty}\in h^{k},\quad k\in \mathbb{Z}_{+}.
\end{equation}

The relationship \eqref{eq_22_IntMR} was extended by Kappeler, Mityagin \cite{KpMt} ($\Rightarrow$) and Djakov, Mityagin
\cite{DjMt1} ($\Leftarrow$) (see also the survey \cite{DjMt2} and the references therein) on the case of symmetric,
monotonic, submultiplicative and strictly subexponential weights $\Omega=\left\{\Omega(n)\right\}_{n\in \mathbb{Z}}$:
\begin{equation*}
  q(x)\in H_{1\mbox{-}per}^{\Omega}(\mathbb{R},\mathbb{R})\Leftrightarrow \{\gamma_{n}(q)\}_{n=1}^{\infty}\in h^{\Omega}.
\end{equation*}
P\"{o}schel \cite{Psch} proved the latter statement in a quite different way.

Here and throughout the remainder of the paper we use the complex Hilbert spaces $H_{1\mbox{-}per}^{w}(\mathbb{R})$ (as well as $H_{\pm}^{w}[0,1]$) of 1-periodic functions and distributions defined by means of their Fourier coefficients:
\begin{align*}
  f(x) & =\sum_{k\in \mathbb{Z}}\widehat{f}(k)e^{i k 2\pi x}\in H_{1\mbox{-}per}^{w}(\mathbb{R}) \Leftrightarrow \left\{\widehat{f}(k)\right\}_{k\in \mathbb{Z}}\in h^{w}, \\
h^{w} & =\left\{a=\left\{a(k)\right\}_{k\in \mathbb{Z}}\,\left|\,\|a\|_{h^{w}}=\left(\sum_{k\in \mathbb{Z}}w^{2}(k)|a(k)|^{2}\right)^{1/2}<\infty\right.\right\}.
\end{align*}
Basically we use the power weights
\begin{equation*}
 w_{s}=\left\{w_{s}(k)\right\}_{k\in \mathbb{Z}}:\qquad w_{s}(k):=(1+2|k|)^{s},\quad s\in \mathbb{R}.
\end{equation*}
The corresponding spaces we denote as
\begin{equation*}
  H_{1\mbox{-}per}^{w_{s}}(\mathbb{R})\equiv H_{1\mbox{-}per}^{s}(\mathbb{R}),\;
  H_{\pm}^{w_{s}}[0,1]\equiv H_{\pm}^{s}[0,1],\quad\text{and}\quad h^{w_{s}}\equiv h^{s},\; s\in \mathbb{R}.
\end{equation*}
For more details, see Appendix.


\section{Main results}\label{ssec_DsPt_IntMR}
As we already remarked, under the assumption \eqref{eq_12_IntMR} the Hill-Schr\"{o}dinger operators $S(q)$ are lower semibounded and
self-adjoint on the Hilbert space $L^{2}(\mathbb{R})$. Their spectra are absolutely continuous and have a classical zone structure \cite{HrMk, Krt, DjMt3, MiMl6, MiSb}.

Using the results of the papers \cite{KpMh, Mhr}, the Isospectral Theorem \ref{th_30_Prf} and \cite[Theorem C]{MiMl6} we
prove uniform many terms asymptotic estimates for the lengths of spectral gaps $\{\gamma_{n}(q)\}_{n=1}^{\infty}$ and their
midpoints $\{\tau_{n}(q)\}_{n=1}^{\infty}$,
\begin{equation*}
  \tau_{n}(q):=\frac{\lambda_{n}^{+}(q)+\lambda_{n}^{-}(q)}{2},\quad n\in \mathbb{N}.
\end{equation*}

\begin{theorem}[\cite{MiMl3, Mlb2}]\label{th_12_IntMR}
Let $q(x)\in H_{1\mbox{-}per}^{-\alpha}(\mathbb{R},\mathbb{R})$, $\alpha\in [0,1)$. Then for any $\varepsilon>0$ uniformly on the bounded sets of distributions $q(x)$ in the corresponding Sobolev spaces $H_{1\mbox{-}per}^{-\alpha}(\mathbb{R})$ the lengths
$\{\gamma_{n}(q)\}_{n=1}^{\infty}$ and midpoints $\{\tau_{n}(q)\}_{n=1}^{\infty}$ of spectral gaps of the Hill-Schr\"{o}dinger
operators $S(q)$ for $n\geq n_{0}\left(\|q\|_{H_{1\mbox{-}per}^{-\alpha}(\mathbb{R})}\right)$ satisfy the following asymptotic
formulae:
\begin{align}
  & \gamma_{n}(q)=2\left|\widehat{q}(n)\right|+h^{1-2\alpha-\varepsilon}(n), \label{eq_24_IntMR} \\
  & \tau_{n}(q)=n^{2}\pi^{2}+\widehat{q}(0)+h^{1-2\alpha-\varepsilon}(n). \label{eq_26_IntMR}
\end{align}
\end{theorem}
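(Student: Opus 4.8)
The plan is to reduce the computation of the gap endpoints $\lambda_{n}^{\pm}(q)$ to a two--dimensional (Lyapunov--Schmidt) eigenvalue problem in Fourier space and then to estimate the resulting corrections. By \cite[Theorem C]{MiMl6}, together with the band--gap structure recalled in the Introduction, for each $n\in\mathbb{N}$ the numbers $\lambda_{n}^{\pm}(q)$ are the two eigenvalues near $n^{2}\pi^{2}$ of the periodic (for even $n$) or semiperiodic (for odd $n$) problem $S_{\pm}(q)$ on $[0,1]$. Expand in the orthonormal system $\{e^{ik\pi x}\}_{k\in\mathbb{Z}}$ of the appropriate parity and set $\lambda=n^{2}\pi^{2}+z$ with $z$ in a fixed disc. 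In this basis the (form--sense, cf.\ \cite{MiMl6}) operator $S_{\pm}(q)-\lambda$ has diagonal entries $(k^{2}-n^{2})\pi^{2}-z$ and off--diagonal entries $\widehat{q}\big(\tfrac{k-l}{2}\big)$; split the associated infinite linear system into the block carried by the resonant pair $\{e^{\pm in\pi x}\}$ and the non--resonant complement. On the complement the diagonal part is boundedly invertible with norm $O(n^{-1})$, while the multiplication--by--$q$ part is relatively form bounded because $q\in H^{-1}_{1\mbox{-}per}(\mathbb{R})$; hence the non--resonant unknowns can be eliminated by a Neumann series which converges once $n\ge n_{0}\big(\|q\|_{H^{-\alpha}_{1\mbox{-}per}(\mathbb{R})}\big)$, each additional factor contributing a norm $\lesssim\|q\|_{H^{-\alpha}_{1\mbox{-}per}(\mathbb{R})}\,n^{-(1-\alpha)}\to0$. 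Substituting back produces an effective $2\times2$ Hermitian matrix with off--diagonal entry $\widehat{q}(n)+b_{n}(z)$ and diagonal entries $n^{2}\pi^{2}+\widehat{q}(0)+a_{n}^{\pm}(z)$; the equality $a_{n}^{+}=a_{n}^{-}$ at leading order --- i.e.\ the symmetry of the $n$--th gap about $\tau_{n}$ --- follows from the $k\mapsto-k$ symmetry of the non--resonant sums combined with $\mathrm{Im}\,q=0$.

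The endpoints near $n^{2}\pi^{2}$ are then the two roots $z_{n}^{\pm}$ of the scalar equation $\det(\,\cdot\,-\lambda)=0$, $\lambda=n^{2}\pi^{2}+z$, and since all corrections are small one reads off
\begin{equation*}
  \gamma_{n}(q)=2\big|\widehat{q}(n)\big|+O\!\left(|b_{n}(z_{n}^{+})|+|b_{n}(z_{n}^{-})|+|a_{n}^{+}-a_{n}^{-}|\right),\qquad
  \tau_{n}(q)=n^{2}\pi^{2}+\widehat{q}(0)+O\!\left(|a_{n}^{+}(z_{n}^{+})|+|a_{n}^{-}(z_{n}^{-})|\right),
\end{equation*}
all the $O$--terms being at least quadratic in $q$. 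Thus \eqref{eq_24_IntMR}--\eqref{eq_26_IntMR} reduce to proving that the correction sequences $\{a_{n}^{\pm}(z_{n}^{\pm})\}_{n}$ and $\{b_{n}(z_{n}^{\pm})\}_{n}$ belong to $h^{1-2\alpha-\varepsilon}$, with a bound on their $h^{1-2\alpha-\varepsilon}$--norm depending only on $\|q\|_{H^{-\alpha}_{1\mbox{-}per}(\mathbb{R})}$; the uniformity over bounded sets and the dependence $n_{0}=n_{0}\big(\|q\|_{H^{-\alpha}_{1\mbox{-}per}(\mathbb{R})}\big)$ then follow automatically, every estimate being phrased through that single norm.

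The core of the argument is this remaining estimate. Each $a_{n}^{\pm}$ and $b_{n}$ is a Neumann expansion whose term of order $\nu\ge2$ is a sum over chains $n=m_{0},m_{1},\dots,m_{\nu-1},m_{\nu}\in\{\pm n\}$ with $m_{j}\notin\{\pm n\}$ for $0<j<\nu$ of
\begin{equation*}
  \frac{\widehat{q}\big(\tfrac{m_{0}-m_{1}}{2}\big)\widehat{q}\big(\tfrac{m_{1}-m_{2}}{2}\big)\cdots\widehat{q}\big(\tfrac{m_{\nu-1}-m_{\nu}}{2}\big)}{\prod_{j=1}^{\nu-1}\big((m_{j}^{2}-n^{2})\pi^{2}-z\big)}.
\end{equation*}
Writing $|\widehat{q}(k)|=(1+2|k|)^{\alpha}c_{k}$ with $\{c_{k}\}_{k\in\mathbb{Z}}\in\ell^{2}$ and $\|\{c_{k}\}\|_{\ell^{2}}=\|q\|_{H^{-\alpha}_{1\mbox{-}per}(\mathbb{R})}$, one is reduced to a family of weighted convolution estimates, the model case $\nu=2$ being
\begin{equation*}
  \sum_{n}(1+2n)^{2(1-2\alpha-\varepsilon)}\Bigg(\sum_{j\neq0,\,n}\frac{(1+2|j|)^{\alpha}(1+2|n-j|)^{\alpha}\,|c_{j}|\,|c_{n-j}|}{\big|4\pi^{2}j(j-n)-z\big|}\Bigg)^{2}\lesssim\|\{c_{k}\}\|_{\ell^{2}}^{4},
\end{equation*}
which is handled by the Schur test and Cauchy--Schwarz in $j$ against a suitable weight, matters finally resting on an elementary but sharp bound for $(1+2n)^{2(1-2\alpha-\varepsilon)}\sum_{j}(4\pi^{2}j(j-n)-z)^{-2}(1+2|j|)^{2\alpha}(1+2|n-j|)^{2\alpha}$. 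Two points shape the statement: first, $\mathrm{Im}\,q=0$ forces the cancellation of the would--be leading $O(1/n)$ contribution to $a_{n}^{\pm}$ (and hence to $\tau_{n}$), without which even the $L^{2}$ case would fail; second, as $\alpha\uparrow1$ the Fourier coefficients $\{\widehat{q}(n)\}$ need not be summable and these sums are only borderline convergent, so one reaches the exponent $1-2\alpha-\varepsilon$ rather than $1-2\alpha$. The sharp weighted bookkeeping required here is exactly what the results of \cite{KpMh, Mhr} provide, and the Isospectral Theorem \ref{th_30_Prf} (with \cite[Theorem C]{MiMl6}) is used to set up this Fourier--block reduction rigorously in the distributional setting and, where convenient, to reduce to a normalized representative of the potential (as well as to recover the boundary case $\alpha=0$ from \eqref{eq_18_IntMR}). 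The terms of order $\nu\ge3$ are dominated by the $\nu=2$ term, each extra factor contributing $\lesssim\|q\|_{H^{-\alpha}_{1\mbox{-}per}(\mathbb{R})}\,n^{-(1-\alpha-\varepsilon)}\ll1$ for $n\ge n_{0}$, so the series converges and the estimate propagates.

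The main obstacle is precisely the second point above: carrying out the weighted convolution estimate with the \emph{sharp} loss of only $\varepsilon$, uniformly on bounded subsets of $H^{-\alpha}_{1\mbox{-}per}(\mathbb{R})$, when $\alpha$ is close to $1$. A secondary but genuine difficulty is that for $\alpha>0$ the entries ``$\widehat{q}(\tfrac{k-l}{2})$'' are the matrix of a merely form bounded (not bounded) multiplication operator, so the Lyapunov--Schmidt reduction and the convergence of the Neumann series must be justified in the quasi-differential / form framework of \cite{MiMl6} rather than by a naive bounded-operator computation.
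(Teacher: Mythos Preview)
Your proposal re-derives, in outline, the Kappeler--M\"{o}hr argument itself (Lyapunov--Schmidt reduction to a $2\times2$ block, Neumann series, weighted convolution estimates), attempting it directly for the form-sum operators $S_{\pm}(q)$. The paper proceeds quite differently and much more briefly: it treats the Kappeler--M\"{o}hr result as a black box (Proposition~\ref{pr_26_Prf}), already proved for the operators $L_{\pm}(q)$ acting on the \emph{negative} Sobolev spaces $H_{\pm}^{-1}[0,1]$, and then invokes the Isospectral Theorem~\ref{th_30_Prf}, $\mathrm{spec}(S_{\pm}(q))=\mathrm{spec}(L_{\pm}(q))$, together with \cite[Theorem~C]{MiMl6}, to transfer those estimates to the spectral gaps of $S(q)$. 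The reality condition $\mathrm{Im}\,q=0$ enters only at the very end, to rewrite the $\gamma_{n}$ estimate of Proposition~\ref{pr_26_Prf}: since $\widehat{q}(-n)=\overline{\widehat{q}(n)}$ and $\omega(-n)=\overline{\omega(n)}$, one has $(\widehat{q}+\omega)(-n)(\widehat{q}+\omega)(n)=|(\widehat{q}+\omega)(n)|^{2}\ge0$, so the $\min_{\pm}$ and the square root collapse to $\bigl|\gamma_{n}(q)-2|(\widehat{q}+\omega)(n)|\bigr|$, after which the Convolution Lemma disposes of $\omega$.

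In particular, your use of the Isospectral Theorem is inverted. It is not a device ``to set up the Fourier--block reduction rigorously in the distributional setting''; that reduction is carried out in \cite{KpMh,Mhr} for $L_{\pm}(q)$, where it is natural because $L_{\pm}(q)$ is an honest bounded perturbation of $D_{\pm}^{2}$ on $H_{\pm}^{-1}[0,1]$. The whole point of the Isospectral Theorem is to \emph{avoid} redoing that reduction for $S_{\pm}(q)$, where (as you yourself flag in your last paragraph) the multiplication operator is only form bounded and the bookkeeping is awkward. There is also no need to recover the case $\alpha=0$ separately from \eqref{eq_18_IntMR}; Proposition~\ref{pr_26_Prf} covers $\alpha\in[0,1)$ uniformly. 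What your route would buy is a self-contained account of why the exponent is $1-2\alpha-\varepsilon$; what the paper's route buys is brevity and a clean division of labour: the hard analytic estimate is quoted from \cite{KpMh,Mhr}, and only the isospectrality and the one-line real-valuedness simplification are supplied here.
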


\begin{corollary*}[\cite{MiMl3, Mlb2}]\label{cr_16_IntMR}
Let $q(x)\in H_{1\mbox{-}per}^{-\alpha}(\mathbb{R},\mathbb{R})$ with $\alpha\in [0,1)$. Then for any $\varepsilon>0$ uniformly by $q(x)$ for the endpoints of spectral gaps of the Hill-Schr\"{o}dinger operators $S(q)$ the following asymptotic estimates are
fulfilled:
\begin{equation*}\label{eq_28_IntMR}
  \lambda_{n}^{\pm}(q)=n^{2}\pi^{2}+\widehat{q}(0)\pm\left|\widehat{q}(n)\right|+h^{1-2\alpha-\varepsilon}(n).
\end{equation*}
\end{corollary*}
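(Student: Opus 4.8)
The plan is to obtain the corollary as an immediate consequence of Theorem~\ref{th_12_IntMR}, using the elementary fact that the endpoints of the $n$-th spectral gap are recovered from its midpoint and its length by
\begin{equation*}
  \lambda_{n}^{\pm}(q)=\tau_{n}(q)\pm\frac{1}{2}\gamma_{n}(q),\qquad n\in \mathbb{N}.
\end{equation*}
First I would fix $\alpha\in[0,1)$ and $\varepsilon>0$ and a bounded set $B\subset H_{1\mbox{-}per}^{-\alpha}(\mathbb{R},\mathbb{R})$. Applying Theorem~\ref{th_12_IntMR} on $B$ yields an index $n_{0}$, depending only on $\sup_{q\in B}\|q\|_{H_{1\mbox{-}per}^{-\alpha}(\mathbb{R})}$, such that for every $q\in B$ and every $n\geq n_{0}$ the formulae \eqref{eq_24_IntMR} and \eqref{eq_26_IntMR} hold, with remainder sequences $\{\sigma_{n}(q)\}_{n\geq n_{0}}$ and $\{\rho_{n}(q)\}_{n\geq n_{0}}$ lying in $h^{1-2\alpha-\varepsilon}$ whose norms are bounded uniformly over $q\in B$.

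Next I would simply substitute. Plugging \eqref{eq_24_IntMR} and \eqref{eq_26_IntMR} into the identity above gives, for $q\in B$ and $n\geq n_{0}$,
\begin{equation*}
  \lambda_{n}^{\pm}(q)=n^{2}\pi^{2}+\widehat{q}(0)\pm\left|\widehat{q}(n)\right|+r_{n}^{\pm}(q),\qquad r_{n}^{\pm}(q):=\rho_{n}(q)\pm\tfrac{1}{2}\sigma_{n}(q).
\end{equation*}
Since $h^{1-2\alpha-\varepsilon}=h^{w_{1-2\alpha-\varepsilon}}$ is a normed linear space, the sequence $\{r_{n}^{\pm}(q)\}$ again belongs to $h^{1-2\alpha-\varepsilon}$ with
\begin{equation*}
  \|\{r_{n}^{\pm}(q)\}\|_{h^{1-2\alpha-\varepsilon}}\leq\|\{\rho_{n}(q)\}\|_{h^{1-2\alpha-\varepsilon}}+\tfrac{1}{2}\|\{\sigma_{n}(q)\}\|_{h^{1-2\alpha-\varepsilon}},
\end{equation*}
which is bounded uniformly over $q\in B$. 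This is precisely the asserted $h^{1-2\alpha-\varepsilon}(n)$ behaviour of the remainder, uniform by $q$, so the proof is complete.

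There is essentially no obstacle here: the statement is a formal corollary of Theorem~\ref{th_12_IntMR} combined with the linear structure of the weighted spaces $h^{s}$. The only point worth a line of care is the bookkeeping of uniformity — one must check that the threshold $n_{0}$ and the bounds on the remainder norms carry over to the \emph{same} bounded set $B$, which is immediate because both asymptotic formulae in Theorem~\ref{th_12_IntMR} are already stated with exactly this uniformity on bounded sets of $H_{1\mbox{-}per}^{-\alpha}(\mathbb{R})$.
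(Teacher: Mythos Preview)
Your proposal is correct and follows precisely the intended route: the paper states this result as an immediate corollary of Theorem~\ref{th_12_IntMR} without giving a separate proof, the underlying identity $\lambda_{n}^{\pm}(q)=\tau_{n}(q)\pm\tfrac{1}{2}\gamma_{n}(q)$ being the obvious link. Your explicit handling of the remainder in $h^{1-2\alpha-\varepsilon}$ and of the uniformity over bounded sets simply spells out what the paper leaves implicit.
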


Now, we can describe a bilateral relationship between the rate of
decreasing/increasing of the lengths of spectral gaps
$\{\gamma_{n}(q)\}_{n=1}^{\infty}$ and the regularity of the
potentials $q(x)$ in the refined scale.

Let
\begin{equation*}
 w_{s,\varphi}=\left\{w_{s,\varphi}(k)\right\}_{k\in \mathbb{Z}}:\qquad w_{s,\varphi}(k):=(1+2|k|)^{s}\,
\varphi(|k|),\quad s\in \mathbb{R},\; \varphi\in \mathrm{SV},
\end{equation*}
where $\varphi$ is a slowly varying on $+\infty$ in a sense of Karamata function \cite{Snt}. It means that it is  a
positive, measurable on $[A,\infty)$, $A>0$ function obeying the condition
\begin{equation*}
 \lim_{t\rightarrow +\infty}\frac{\varphi(\lambda t)}{\varphi(t)}=1,\quad \lambda>0.
\end{equation*}
For example,
\begin{equation*}
 \varphi(t)=(\log t)^{r_{1}}(\log\log t)^{r_{2}}\ldots (\log\ldots\log t)^{r_{k}}\in \mathrm{SV},\quad
\{r_{1},\ldots,r_{k}\}\subset \mathbb{R},\; k\in \mathbb{N}.
\end{equation*}

The H\"{o}rmander spaces
\begin{equation*}
  H_{1\mbox{-}per}^{w_{s,\varphi}}(\mathbb{R})\equiv H_{1\mbox{-}per}^{s,\varphi}(\mathbb{R})\simeq H^{s,\varphi}(\mathbb{S}),\quad \mathbb{S}:=\mathbb{R}/2\pi\mathbb{Z},
\end{equation*}
and the weighted sequence spaces
\begin{equation*}
 h^{w_{s,\varphi}}\equiv h^{s,\varphi}
\end{equation*}
form the refined scales:
\begin{align}
 H_{1\mbox{-}per}^{s+\varepsilon}(\mathbb{R})&\hookrightarrow H_{1\mbox{-}per}^{s,\varphi}(\mathbb{R})\hookrightarrow H_{1\mbox{-}per}^{s-\varepsilon}(\mathbb{R}), \label{eq_27.1_IntMR} \\
h^{s+\varepsilon}&\hookrightarrow h^{s,\varphi}\hookrightarrow h^{s-\varepsilon}, \hspace{90pt} s\in \mathbb{R},\,
\varepsilon>0,\,\varphi\in \mathrm{SV},
\label{eq_27.2_IntMR}
\end{align}
which in a general situation were studied by Mikhailets and Murach \cite{MiMr}.

The following statements show that the sequence $\{\gamma_{n}(q)\}_{n=1}^{\infty}$ has the same behaviour as the Fourier
coefficients $\{\widehat{q}(n)\}_{n=-\infty}^{\infty}$ with respect to the refined scale $\{h^{s,\varphi}\}_{s\in
\mathbb{R},\varphi\in \mathrm{SV}}$.
\begin{theorem}\label{th_18_IntMR}
Let $q(x)\in H_{1\mbox{-}per}^{-1+}(\mathbb{R},\mathbb{R})$. Then
\begin{equation*}\label{eq_30_IntMR}
  q(x)\in H_{1\mbox{-}per}^{s,\varphi}\left(\mathbb{R},\mathbb{R}\right)\Leftrightarrow
  \{\gamma_{n}(q)\}_{n=1}^{\infty}\in h^{s,\varphi},\quad s\in (-1,0], \varphi\in \mathrm{SV}.
\end{equation*}
\end{theorem}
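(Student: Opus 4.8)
The plan is to read off both implications from the asymptotic expansion \eqref{eq_24_IntMR} of Theorem~\ref{th_12_IntMR} together with the embeddings \eqref{eq_27.1_IntMR}--\eqref{eq_27.2_IntMR} of the refined scale. Two preliminary remarks. Since $q$ is real-valued, $|\widehat{q}(-n)|=|\widehat{q}(n)|$ and $w_{s,\varphi}$ depends only on $|k|$, so $q\in H_{1\mbox{-}per}^{s,\varphi}(\mathbb{R})$ if and only if $\{|\widehat{q}(n)|\}_{n=1}^{\infty}\in h^{s,\varphi}$; also, $h^{s,\varphi}$ is a linear space and membership in it is unaffected by changing finitely many entries. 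Moreover, by Theorem~\ref{th_12_IntMR}, whenever $q\in H_{1\mbox{-}per}^{-\alpha}(\mathbb{R})$ with $\alpha\in[0,1)$ one has $\gamma_{n}(q)=2|\widehat{q}(n)|+r_{n}$ with $\{r_{n}\}_{n\geq n_{0}}\in h^{1-2\alpha-\varepsilon}$ for every $\varepsilon>0$; all that follows is a comparison of the remainder exponent $1-2\alpha$ with $s$.

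For the implication $\Rightarrow$, suppose $q\in H_{1\mbox{-}per}^{s,\varphi}(\mathbb{R})$. By \eqref{eq_27.1_IntMR}, $q\in H_{1\mbox{-}per}^{-\alpha}(\mathbb{R})$ with $\alpha:=\varepsilon'-s$; since $s\leq0$ and $1+s>0$, choose $\varepsilon'>0$ with $\alpha\in(0,1)$ and $2\varepsilon'<1+s$. Then $\{r_{n}\}_{n\geq n_{0}}\in h^{1-2\alpha-\varepsilon}$ for every $\varepsilon>0$, and choosing $\varepsilon<1+s-2\varepsilon'$ makes $1-2\alpha-\varepsilon=1+2s-2\varepsilon'-\varepsilon>s$, so $h^{1-2\alpha-\varepsilon}\hookrightarrow h^{s,\varphi}$ by \eqref{eq_27.2_IntMR}; hence $\{r_{n}\}\in h^{s,\varphi}$, and together with $\{2|\widehat{q}(n)|\}_{n\geq1}\in h^{s,\varphi}$ this gives $\{\gamma_{n}(q)\}_{n\geq1}\in h^{s,\varphi}$.

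For the implication $\Leftarrow$, the only prior information is $q\in H_{1\mbox{-}per}^{-1+}(\mathbb{R})$, i.e.\ $q\in H_{1\mbox{-}per}^{\sigma_{0}}(\mathbb{R})$ for some $\sigma_{0}\in(-1,0]$; when $s$ is close to $-1$ the remainder $\{r_{n}\}\in h^{1+2\sigma_{0}-\varepsilon}$ from a single use of \eqref{eq_24_IntMR} need not dominate $h^{s,\varphi}$, so I would bootstrap. Assume $q\in H_{1\mbox{-}per}^{\sigma}(\mathbb{R})$ with $\sigma\in[\sigma_{0},0]$ and $1+2\sigma\leq s$; apply Theorem~\ref{th_12_IntMR} with $\alpha=-\sigma$ and fix $\varepsilon\in(0,(1+\sigma_{0})/2]$. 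Then $\{r_{n}\}_{n\geq n_{0}}\in h^{1+2\sigma-\varepsilon}$, while $1+2\sigma-\varepsilon<s$ gives $\{\gamma_{n}(q)\}_{n\geq1}\in h^{s,\varphi}\hookrightarrow h^{1+2\sigma-\varepsilon}$ by \eqref{eq_27.2_IntMR}; subtracting, $\{2|\widehat{q}(n)|\}_{n\geq n_{0}}\in h^{1+2\sigma-\varepsilon}$, that is $q\in H_{1\mbox{-}per}^{\sigma'}(\mathbb{R})$ with $\sigma'=1+2\sigma-\varepsilon$, where $\sigma'\geq(1+3\sigma)/2>-1$ and $\sigma'-\sigma=1+\sigma-\varepsilon\geq(1+\sigma_{0})/2>0$. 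Thus each round raises the Sobolev index by at least the fixed amount $(1+\sigma_{0})/2$, so after finitely many rounds (possibly none) we reach $\sigma\in(-1,0]$ with $1+2\sigma>s$; then Theorem~\ref{th_12_IntMR} with $\alpha=-\sigma$ and $\varepsilon<1+2\sigma-s$ gives $\{r_{n}\}\in h^{1+2\sigma-\varepsilon}\hookrightarrow h^{s,\varphi}$, whence $\{2|\widehat{q}(n)|\}_{n\geq n_{0}}\in h^{s,\varphi}$ and $q\in H_{1\mbox{-}per}^{s,\varphi}(\mathbb{R})$.

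The reductions and the embedding bookkeeping are routine; the genuine obstacle is the backward direction for $s$ near $-1$, where the remainder in \eqref{eq_24_IntMR} is not yet negligible and one must iterate. The iteration closes precisely because for $\alpha<1$ the remainder exponent $1-2\alpha$ strictly exceeds the input exponent $-\alpha$, so each round strictly improves the regularity of $q$ by an amount bounded below by a positive constant; hence it terminates. (Should $H_{1\mbox{-}per}^{-1+}(\mathbb{R})$ be read in the stronger sense $\bigcap_{\varepsilon>0}H_{1\mbox{-}per}^{-\varepsilon}(\mathbb{R})$, the iteration is superfluous, as then every $\alpha\in(0,(1-s)/2)$ is admissible from the outset.)
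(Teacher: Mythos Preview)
Your argument is correct and follows the same strategy as the paper: both directions rest on the asymptotic estimate \eqref{eq_24_IntMR} together with the refined-scale embeddings \eqref{eq_27.1_IntMR}--\eqref{eq_27.2_IntMR}. The forward direction is identical to the paper's.

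For the backward direction there is a minor organizational difference. The paper first isolates the pure power-weight case as a separate Corollary, namely
\[
  q\in H_{1\mbox{-}per}^{s}(\mathbb{R},\mathbb{R})\ \Longleftrightarrow\ \{\gamma_{n}(q)\}\in h^{s},\qquad s\in(-1,0],
\]
obtained either by citing \cite[Corollary~0.2~(2.6)]{KpMh} (transferred via the Isospectral Theorem~\ref{th_30_Prf} and \cite[Theorem~C]{MiMl6}) or, as the paper notes, ``directly \ldots\ using the estimates \eqref{eq_24_IntMR}''. With this Corollary in hand, one step suffices: $\{\gamma_{n}\}\in h^{s,\varphi}\hookrightarrow h^{s-\delta}$ gives $q\in H_{1\mbox{-}per}^{s-\delta}$ immediately, and then a single application of \eqref{eq_24_IntMR} at level $s-\delta$ finishes. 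Your bootstrap is exactly the ``direct'' proof of that Corollary, unpacked and merged into the main argument rather than quoted as a lemma. So the two proofs coincide in substance; the paper's packaging is a bit cleaner, while yours is more self-contained because it avoids the external citation to \cite{KpMh}.
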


Note that the H\"{o}rmander spaces $H_{1\mbox{-}per}^{s,\varphi}(\mathbb{R})$  with $\varphi\equiv 1$ coincide with the Sobolev spaces,
\begin{equation*}
 H_{1\mbox{-}per}^{s,1}(\mathbb{R})\equiv H_{1\mbox{-}per}^{s}(\mathbb{R}),\quad\mbox{and}\quad h^{s,1}\equiv h^{s},\quad s\in \mathbb{R}.
\end{equation*}

\begin{corollary*}[\cite{MiMl3, Mlb2}]
 Let $q(x)\in H_{1\mbox{-}per}^{-1+}(\mathbb{R},\mathbb{R})$, then
\begin{equation}\label{eq_31_IntMR}
  q(x)\in H_{1\mbox{-}per}^{s}\left(\mathbb{R},\mathbb{R}\right)\Leftrightarrow
  \{\gamma_{n}(q)\}_{n=1}^{\infty}\in h^{s},\quad s\in (-1,0].
\end{equation}
\end{corollary*}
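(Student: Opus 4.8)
The fastest route is to read off the equivalence \eqref{eq_31_IntMR} as the particular case $\varphi\equiv 1$ of Theorem~\ref{th_18_IntMR}: since $H_{1\mbox{-}per}^{s,1}(\mathbb{R})\equiv H_{1\mbox{-}per}^{s}(\mathbb{R})$ and $h^{s,1}\equiv h^{s}$, that theorem restricts verbatim to the statement claimed here, so once Theorem~\ref{th_18_IntMR} is available nothing more is needed. It is worth recording, though, that \eqref{eq_31_IntMR} can also be obtained in a self-contained way from the asymptotics \eqref{eq_24_IntMR} of Theorem~\ref{th_12_IntMR}, and I would organize that argument as follows. Two trivial reductions are used throughout: first, $q$ being real-valued, $\widehat{q}(-k)=\overline{\widehat{q}(k)}$ and the weights $w_{s}$ are even, so $q\in H_{1\mbox{-}per}^{s}(\mathbb{R})$ is equivalent to $\{\widehat{q}(n)\}_{n=1}^{\infty}\in h^{s}$ (the single coefficient $\widehat{q}(0)$ being immaterial); second, altering finitely many entries of a sequence does not affect its $h^{s}$-membership, so the restriction $n\ge n_{0}$ in Theorem~\ref{th_12_IntMR} is harmless.

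For $(\Rightarrow)$ one assumes $q\in H_{1\mbox{-}per}^{s}(\mathbb{R})$ with $s\in(-1,0]$ and applies Theorem~\ref{th_12_IntMR} with $\alpha:=-s\in[0,1)$, which is legitimate because $H_{1\mbox{-}per}^{-\alpha}(\mathbb{R})=H_{1\mbox{-}per}^{s}(\mathbb{R})$. It gives $\gamma_{n}(q)=2|\widehat{q}(n)|+r_{n}$ with $\{r_{n}\}\in h^{1-2\alpha-\varepsilon}=h^{1+2s-\varepsilon}$ for every $\varepsilon>0$. Taking $\varepsilon\in(0,s+1)$ makes $1+2s-\varepsilon>s$, hence $h^{1+2s-\varepsilon}\hookrightarrow h^{s}$ (cf.\ \eqref{eq_27.2_IntMR}); since also $\{2|\widehat{q}(n)|\}_{n=1}^{\infty}\in h^{s}$, we get $\{\gamma_{n}(q)\}_{n=1}^{\infty}\in h^{s}$.

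For $(\Leftarrow)$ one assumes $\{\gamma_{n}(q)\}_{n=1}^{\infty}\in h^{s}$; by hypothesis $q\in H_{1\mbox{-}per}^{-1+}(\mathbb{R})$, hence $q\in H_{1\mbox{-}per}^{-\alpha_{0}}(\mathbb{R})$ for some $\alpha_{0}\in[0,1)$. If $s\le-\alpha_{0}$ there is nothing to prove, since $H_{1\mbox{-}per}^{-\alpha_{0}}(\mathbb{R})\hookrightarrow H_{1\mbox{-}per}^{s}(\mathbb{R})$; otherwise one bootstraps. Fix $\varepsilon\in(0,\min\{1-\alpha_{0},\,1-s\})$ once and for all, and suppose it is already known that $q\in H_{1\mbox{-}per}^{-\alpha}(\mathbb{R})$ for some $\alpha\in[0,1)$ with $-\alpha<s$. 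Applying Theorem~\ref{th_12_IntMR} with this $\alpha$ and writing $2|\widehat{q}(n)|=\gamma_{n}(q)-r_{n}$, $\{r_{n}\}\in h^{1-2\alpha-\varepsilon}$, one obtains $q\in H_{1\mbox{-}per}^{s'}(\mathbb{R})$ with $s':=\min\{s,\,1-2\alpha-\varepsilon\}$; using $\varepsilon<1-\alpha_{0}$ and $\alpha\le\alpha_{0}$ one checks $-\alpha<s'\le s$ and, when $s'<s$, that the new index $s'=1-2\alpha-\varepsilon$ exceeds the old index $-\alpha$ by at least $1-\alpha_{0}-\varepsilon>0$ and again satisfies $-s'\in[0,1)$. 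If $s'=s$ we are done; if $s'<s$, one repeats the step with $\alpha$ replaced by $-s'$. A strictly increasing sequence that is bounded above by $s\le0$ and has increments bounded below by the fixed positive number $1-\alpha_{0}-\varepsilon$ must be finite, so after finitely many rounds the case $s'=s$ occurs, giving $\{\widehat{q}(n)\}_{n=1}^{\infty}\in h^{s}$, i.e.\ $q\in H_{1\mbox{-}per}^{s}(\mathbb{R})$.

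I expect the bootstrap in the $(\Leftarrow)$ direction to be the only genuine obstacle. One application of Theorem~\ref{th_12_IntMR} transfers $h$-scale membership between $\{\gamma_{n}(q)\}$ and $\{\widehat{q}(n)\}$ only up to the remainder class $h^{1-2\alpha-\varepsilon}$, and this beats $h^{s}$ exactly when the a priori regularity $-\alpha$ of $q$ already satisfies $-\alpha\ge(s-1+\varepsilon)/2$; for potentials only slightly smoother than $H_{1\mbox{-}per}^{-1}(\mathbb{R})$ this fails, so iteration is forced. The substance is then the elementary bookkeeping above --- each round improves the regularity index by a uniformly positive amount, hence the process terminates --- which is the same mechanism behind the classical gap/regularity equivalences recalled in the Introduction.
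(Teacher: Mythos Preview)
Your ``fastest route'' is circular in the paper's architecture. The paper proves this Corollary \emph{before} Theorem~\ref{th_18_IntMR}, and the proof of Theorem~\ref{th_18_IntMR} explicitly invokes the Corollary (formula~\eqref{eq_31_IntMR}) in the $(\Leftarrow)$ direction, at the step $\{\gamma_n\}\in h^{s-\delta}\Rightarrow q\in H_{1\mbox{-}per}^{s-\delta}$. So deducing the Corollary from Theorem~\ref{th_18_IntMR} by specializing $\varphi\equiv 1$ presupposes what you are trying to prove.

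Your second, self-contained argument via the asymptotics \eqref{eq_24_IntMR} is correct, and the bootstrap in the $(\Leftarrow)$ direction is carried out cleanly: the key points --- that each round improves the regularity index by at least $1-\alpha_0-\varepsilon>0$, that the new $-s'$ stays in $[0,1)$ so Theorem~\ref{th_12_IntMR} applies again, and that the process therefore terminates --- are all verified. This is precisely the route the paper mentions in one line (``Also it can be proved directly as well as \cite[Corollary~0.2~(2.6)]{KpMh} using the estimates \eqref{eq_24_IntMR}'') without writing out the iteration. The paper's \emph{primary} argument is different and shorter: it simply transports the corresponding result for the operators $L_{\pm}(q)$ on $H_{\pm}^{-1}[0,1]$, namely \cite[Corollary~0.2~(2.6)]{KpMh}, to the form-sum operators $S_{\pm}(q)$ via the Isospectral Theorem~\ref{th_30_Prf} and \cite[Theorem~C]{MiMl6}. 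That route avoids the bootstrap altogether by citing Kappeler--M\"{o}hr, whereas your route is self-contained modulo Theorem~\ref{th_12_IntMR}; both are valid, but you should drop the opening appeal to Theorem~\ref{th_18_IntMR} and lead with the direct argument.
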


Theorem \ref{th_18_IntMR} together with \cite[Theorem 1.2]{KpMt}, and the properties \eqref{eq_27.1_IntMR} and \eqref{eq_27.2_IntMR},
involve the following extension of the Marchenko-Ostrovskii Theorem \eqref{eq_22_IntMR}.
\begin{theorem}\label{th_19_IntMR}
 Let $q(x)\in H_{1\mbox{-}per}^{-1+}(\mathbb{R},\mathbb{R})$. Then
\begin{equation*}
  q(x)\in H_{1\mbox{-}per}^{s,\varphi}\left(\mathbb{R},\mathbb{R}\right)\Leftrightarrow
  \{\gamma_{n}(q)\}_{n=1}^{\infty}\in h^{s,\varphi},\quad s\in (-1,\infty), \varphi\in \mathrm{SV}.
\end{equation*}
In particular,
\begin{equation*}
  q(x)\in H_{1\mbox{-}per}^{s}\left(\mathbb{R},\mathbb{R}\right)\Leftrightarrow
  \{\gamma_{n}(q)\}_{n=1}^{\infty}\in h^{s},\quad s\in (-1,\infty).
\end{equation*}
\end{theorem}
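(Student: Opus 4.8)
The plan is to treat $s\in(-1,\infty)$ by splitting the range at $s=0$. On $(-1,0]$ the asserted equivalence is literally Theorem~\ref{th_18_IntMR}, so there is nothing to do there and all the work is for $s>0$. Fix such an $s$ and a $\varphi\in\mathrm{SV}$, and put $\varepsilon=s>0$ in the refined-scale embeddings \eqref{eq_27.1_IntMR}--\eqref{eq_27.2_IntMR}; this gives $H_{1\mbox{-}per}^{s,\varphi}(\mathbb{R})\hookrightarrow H_{1\mbox{-}per}^{0}(\mathbb{R})=L_{1\mbox{-}per}^{2}(\mathbb{R})$ and $h^{s,\varphi}\hookrightarrow h^{0}=\ell^{2}$. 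Consequently, in either direction of the claimed equivalence the potential is forced to be a real-valued $L^{2}$-function: directly if $q\in H_{1\mbox{-}per}^{s,\varphi}(\mathbb{R},\mathbb{R})$, and, if instead $\{\gamma_{n}(q)\}_{n=1}^{\infty}\in h^{s,\varphi}\subset\ell^{2}$, because the hypothesis $q\in H_{1\mbox{-}per}^{-1+}(\mathbb{R},\mathbb{R})$ and Theorem~\ref{th_18_IntMR} (applied with exponent $0$ and $\varphi\equiv1$) then give $q\in L_{1\mbox{-}per}^{2}(\mathbb{R},\mathbb{R})$. So it remains only to prove, for real-valued $L^{2}$-potentials and every $s>0$, $\varphi\in\mathrm{SV}$,
\begin{equation*}
 q(x)\in H_{1\mbox{-}per}^{s,\varphi}(\mathbb{R},\mathbb{R})\ \Longleftrightarrow\ \{\gamma_{n}(q)\}_{n=1}^{\infty}\in h^{s,\varphi}.
\end{equation*}

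This I would obtain from the weighted Marchenko--Ostrovskii theorem for $L^{2}$-potentials applied with the weight $\Omega=w_{s,\varphi}$: the implication $\Rightarrow$ is \cite[Theorem~1.2]{KpMt}, while $\Leftarrow$ is the corresponding converse (the Djakov--Mityagin converse \cite{DjMt1} for submultiplicative weights, or, since $q$ is already $L^{2}$, a short bootstrap from \eqref{eq_20_IntMR} and its sharpenings for smoother potentials, which push $q$ from $L^{2}$ into $H_{1\mbox{-}per}^{s,\varphi}(\mathbb{R})$ in finitely many steps). Everything then hinges on one point: that $\Omega=w_{s,\varphi}$, or an equivalent weight (which changes neither $H_{1\mbox{-}per}^{s,\varphi}(\mathbb{R})$ nor $h^{s,\varphi}$), satisfies the structural hypotheses of that theorem --- that it is symmetric, monotone, strictly subexponential and submultiplicative.

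Symmetry is built into $w_{s,\varphi}$, and strict subexponentiality is clear since $\log w_{s,\varphi}(n)=s\log(1+2n)+\log\varphi(n)=o(n)$. For monotonicity and submultiplicativity I would use the Karamata theory of slowly varying functions: $\varphi$ is asymptotically equivalent to a normalized $\tilde\varphi\in\mathrm{SV}$ which is continuous and for which $t\mapsto t^{\delta}\tilde\varphi(t)$ is eventually increasing for every $\delta>0$, and replacing $\varphi$ by $\tilde\varphi$ is harmless; taking $0<\delta<s$, the weight $(1+2|k|)^{s}\tilde\varphi(|k|)=(1+2|k|)^{s-\delta}\,(1+2|k|)^{\delta}\tilde\varphi(|k|)$ is then eventually monotone. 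For submultiplicativity, for large $m,n$
\begin{equation*}
 \frac{w_{s,\tilde\varphi}(m+n)}{w_{s,\tilde\varphi}(m)\,w_{s,\tilde\varphi}(n)}=\Bigl(\frac{1+2(m+n)}{(1+2m)(1+2n)}\Bigr)^{s}\cdot\frac{\tilde\varphi(m+n)}{\tilde\varphi(m)\,\tilde\varphi(n)};
\end{equation*}
the first factor is $\le1$ and, because $s>0$, is small once $m,n$ are both large, which more than absorbs the at most slowly varying growth of the second factor, while the finitely many bounded ranges of $(m,n)$ are controlled by the continuity and positivity of $\tilde\varphi$. Hence $w_{s,\varphi}$ is submultiplicative up to a constant, which is all that is needed, and since $H_{1\mbox{-}per}^{w_{s,\varphi}}(\mathbb{R})=H_{1\mbox{-}per}^{s,\varphi}(\mathbb{R})$ and $h^{w_{s,\varphi}}=h^{s,\varphi}$ by definition the proof is complete; the particular case $\varphi\equiv1$ is then immediate from $H_{1\mbox{-}per}^{s,1}(\mathbb{R})\equiv H_{1\mbox{-}per}^{s}(\mathbb{R})$, $h^{s,1}\equiv h^{s}$.

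The step I expect to be the main obstacle is precisely this admissibility check. A generic $\varphi\in\mathrm{SV}$ (for instance $1/\log t$) is by itself neither monotone nor submultiplicative, so one genuinely needs Karamata's normalization to repair monotonicity and the strict positivity $s>0$ to let the polynomial factor swallow the defect of submultiplicativity of $\varphi$. This is also the structural reason the range is cut at $s=0$: for $s\le0$ the polynomial factor no longer helps, the weights become (sub)bounded, and the converse direction in particular must instead rest on the independent argument behind Theorem~\ref{th_18_IntMR}.
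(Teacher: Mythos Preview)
Your argument is correct, but it follows a different route from the paper's, and the contrast is worth noting.

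Both proofs split at $s=0$ and cite Theorem~\ref{th_18_IntMR} for $s\in(-1,0]$. For $s>0$, however, the paper never verifies that $w_{s,\varphi}$ is an admissible (monotone, submultiplicative, subexponential) weight for the general Kappeler--Mityagin/Djakov--Mityagin machinery. Instead it works only at the \emph{plain Sobolev} level $s-\delta$: from \cite[Theorem~1.2]{KpMt} one has the asymptotic $\gamma_{n}(q)=2|\widehat{q}(n)|+h^{1+s-\delta}(n)$ whenever $q\in H_{1\mbox{-}per}^{s-\delta}(\mathbb{R},\mathbb{R})$, and since $1+s-\delta>s$ for $\delta<1$, the remainder automatically lies in $h^{s,\varphi}$ by \eqref{eq_27.2_IntMR}. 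The forward implication is then immediate from $H_{1\mbox{-}per}^{s,\varphi}\hookrightarrow H_{1\mbox{-}per}^{s-\delta}$; for the converse, the paper first records the pure Sobolev equivalence $q\in H_{1\mbox{-}per}^{s}\Leftrightarrow\{\gamma_{n}\}\in h^{s}$ for all $s>-1$ (a finite bootstrap from \eqref{eq_31_IntMR}, \eqref{eq_22_IntMR} and the asymptotic above), uses it at level $s-\delta$ to place $q$ in $H_{1\mbox{-}per}^{s-\delta}$, and then reads off $\{\widehat{q}(n)\}\in h^{s,\varphi}$ from the same asymptotic.

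What the paper's route buys is economy: the entire Karamata-theoretic verification you carry out --- passing to a normalized representative, arranging eventual monotonicity, and using the strictly positive power $s$ to absorb the defect of submultiplicativity of $\varphi$ --- is simply bypassed, because the weighted theorems are only ever invoked for pure power weights. Your route, by contrast, is more direct once the admissibility is in hand, and it makes transparent \emph{why} the cut at $s=0$ is natural (the polynomial factor is exactly what restores submultiplicativity). Both are valid; the paper's is shorter and avoids precisely the step you flagged as the main obstacle.
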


\begin{remark*}\label{rm_20_IntMR}
In the preprint \cite{DjMt3} the authors announced without a proof the more general statement:
\begin{equation*}
  q(x)\in H_{1\mbox{-}per}^{\widehat{\Omega}}(\mathbb{R},\mathbb{R})\Leftrightarrow
  \{\gamma_{n}(q)\}_{n=1}^{\infty}\in h^{\widehat{\Omega}},\quad
  \widehat{\Omega}=\left\{\frac{\Omega(n)}{1+2|n|}\right\}_{n\in \mathbb{Z}},
\end{equation*}
where the weights $\Omega=\{\Omega(n)\}_{n\in \mathbb{Z}}$ are supposed to be symmetric, monotonic, submultiplicative and
strictly subexponential ones. This result contains the limiting case
\begin{equation*}
 q(x)\in H_{1\mbox{-}per}^{-1}\left(\mathbb{R},\mathbb{R}\right)\setminus H_{1\mbox{-}per}^{-1+}\left(\mathbb{R},\mathbb{R}\right).
\end{equation*}
An implication
\begin{equation*}
 q(x)\in H_{1\mbox{-}per}^{-1}\left(\mathbb{R},\mathbb{R}\right)\Rightarrow
  \{\gamma_{n}(q)\}_{n=1}^{\infty}\in h^{-1}
\end{equation*}
was proved in the paper \cite{Krt}.
\end{remark*}


\section{Proofs}\label{sec_Prf}
Spectra of the Hill-Schr\"{o}dinger operators $S(q)$, $q(x)\in
H_{1\mbox{-}per}^{-1}\left(\mathbb{R},\mathbb{R}\right)$ are defined by the endpoints
$\{\lambda_{0}(q),\lambda_{n}^{\pm}(q)\}_{n=1}^{\infty}$ of spectral gaps. The endpoints as in the case of
$L_{1\mbox{-}per}^{2}(\mathbb{R})$-potentials satisfy the inequalities:
\begin{equation*}
  -\infty<\lambda_{0}(q)<\lambda_{1}^{-}(q)\leq\lambda_{1}^{+}(q)<
  \lambda_{2}^{-}(q)\leq\lambda_{2}^{+}(q)<\cdots\,.
\end{equation*}
For even/odd numbers $n\in \mathbb{Z}_{+}$ they are eigenvalues of the periodic/semiperiodic problems on the interval
$[0,1]$ \cite[Theorem C]{MiMl6},
\begin{equation*}
  S_{\pm}(q)u=\lambda u.
\end{equation*}
The operators
\begin{align*}
   & \hspace{15pt} S_{\pm}u\equiv S_{\pm}(q)u:=D_{\pm}^{2}u+q(x)u, \hspace{125pt}\mbox{} \\
   & \bullet\; D_{\pm}^{2}:=-d^{2}/dx^{2},\; \mathrm{Dom}\,(D_{\pm}^{2})=H_{\pm}^{2}[0,1]; \\
   & \bullet\; q(x)=\sum_{k\in \mathbb{Z}}\widehat{q}(k)\,e^{i\,k 2\pi x}\in H^{-1}_{+}\left([0,1],\mathbb{R}\right); \\
   & \bullet\; \mathrm{Dom}\left(S_{\pm}(q)\right)=\left\{u\in H_{\pm}^{1}[0,1]\,\left|\, D_{\pm}^{2}u+q(x)u
   \in L^{2}(0,1)\right.\right\},
\end{align*}
are well defined on the Hilbert space $L^{2}(0,1)$ as lower semibounded, self-adjoint form-sum operators, and they
have the pure discrete spectra
\begin{equation*}
  \mathrm{spec}\left(S_{\pm}(q)\right)=\left\{\lambda_{0}[S_{+}(q)],\; \lambda_{2n-1}^{\pm}[S_{-}(q)],\;
  \lambda_{2n}^{\pm}[S_{+}(q)]\right\}_{n=1}^{\infty}.
\end{equation*}

In the papers \cite{MiMl3, Mlb2, MiMl4, MiMl5} the authors meticulously investigated the more general
periodic/semiperiodic form-sum operators
\begin{equation*}
   S_{m,\pm}(V):=D_{\pm}^{2m}\dotplus V(x),\quad V(x)\in H_{+}^{-m}[0,1],\; m\in \mathbb{N},
\end{equation*}
on the Hilbert space $L^{2}(0,1)$.

So, we need to find precise asymptotic estimates for the operators $S_{\pm}(q)$ eigenvalues. It is quite difficult problem
as the form-sum operators $S_{\pm}(q)$ are not convenient for investigation. We also cannot apply
approach developed by Savchuk and Shkalikov (see the survey \cite{SvSh} and the references therein) considering the
operators $S_{\pm}(q)$ as quasi-differential ones as the periodic/semiperiodic boundary conditions are not strongly
regular by Birkhoff. Therefore we propose an alternative approach which is based on isospectral transformation of the problem.

Kappeler and M\"{o}hr \cite{KpMh, Mhr} investigated the second order differential operators $L_{\pm}(q)$, $q(x)\in
H_{+}^{-1}\left([0,1],\mathbb{R}\right)$ (in general, with complex-valued potentials) defined on the \textit{negative}
Sobolev spaces $H_{\pm}^{-1}[0,1]$,
\begin{equation*}
  L_{\pm}\equiv L_{\pm}(q):=D_{\pm}^{2}+q(x),\quad \mathrm{Dom}\,(L_{\pm}(q))=H_{\pm}^{1}[0,1].
\end{equation*}
They established that the operators $L_{\pm}(q)$ with $q(x)\in H_{+}^{-\alpha}\left([0,1],\mathbb{R}\right)$,
$\alpha\in [0,1)$ have the real-valued discrete spectra
\begin{equation*}
  \mathrm{spec}\left(L_{\pm}(q)\right)=\left\{\lambda_{0}[L_{+}(q)],\; \lambda_{2n-1}^{\pm}[L_{-}(q)],\;
  \lambda_{2n}^{\pm}[L_{+}(q)]\right\}_{n=1}^{\infty}
\end{equation*}
such that
\begin{equation*}
  \left|\lambda_{n}^{\pm}[L_{\pm}(q)]-n^{2}\pi^{2}-\widehat{q}(0)\right|\leq C n^{\alpha},
  \quad n\geq n_{0}\left(\|q\|_{H_{+}^{-\alpha}[0,1]}\right).
\end{equation*}
More precisely, for the values
\begin{align*}
  \gamma_{n}[L_{\pm}(q)] & :=\lambda_{n}^{+}[L_{\pm}(q)]-\lambda_{n}^{-}[L_{\pm}(q)],\quad n\in \mathbb{N}, \\
  \tau_{n}[L_{\pm}(q)] & :=\frac{\lambda_{n}^{+}[L_{\pm}(q)]+\lambda_{n}^{-}[L_{\pm}(q)]}{2},\quad n\in \mathbb{N}
\end{align*}
they proved the following result.
\begin{proposition}[Kappeler, M\"{o}hr \cite{KpMh, Mhr}]\label{pr_26_Prf}
Let $q(x)\in H_{+}^{-\alpha}\left([0,1],\mathbb{R}\right)$, and $\alpha\in [0,1)$. Then for any $\varepsilon>0$ uniformly on the bounded sets
of distributions $q(x)$ in the Sobolev spaces $H_{+}^{-\alpha}[0,1]$ for the operators $L_{\pm}(q)$ values
$\left\{\gamma_{n}[L_{\pm}(q)]\right\}_{n=1}^{\infty}$ and $\left\{\tau_{n}[L_{\pm}(q)]\right\}_{n=1}^{\infty}$ for
$n\geq n_{0}\left(\|q\|_{H_{+}^{-\alpha}[0,1]}\right)$ the following asymptotic estimates are fulfilled:
\begin{align*}
  i) \hspace{15pt} & \left\{\min_{\pm}\left|\gamma_{n}[L_{\pm}(q)]\pm 2\sqrt{\left(\widehat{q}+
  \omega\left)(-n)\right(\widehat{q}+\omega\right)(n)}\right|\right\}_{n\in \mathbb{N}}
  \in h^{1-2\alpha-\varepsilon}, \hspace{70pt}\mbox{} \\
  ii) \hspace{15pt} & \tau_{n}[L_{\pm}(q)]=n^{2}\pi^{2}+\widehat{q}(0)+h^{1-2\alpha-\varepsilon}(n),
\end{align*}
where the convolution
\begin{equation*}
  \left\{\omega(n)\right\}_{n\in \mathbb{Z}}\equiv \left\{\frac{1}{\pi^{2}}\sum_{k\in \mathbb{Z}\setminus\{\pm n\}}
  \frac{\widehat{q}\,(n-k)\widehat{q}(n+k)}{n^{2}-k^{2}}\right\}_{n\in \mathbb{Z}}\in
  \begin{cases}
    h^{1-\alpha}, &  \alpha\in [0,1/2), \\
    h^{3/2-2\alpha-\delta}, &  \alpha\in [1/2,1)
  \end{cases}
\end{equation*}
with any $\delta>0$ (see the Convolution Lemma \cite{KpMh, Mhr}).
\end{proposition}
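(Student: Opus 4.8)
The plan is to carry out, following Kappeler and M\"ohr \cite{KpMh,Mhr}, a Lyapunov--Schmidt reduction of the eigenvalue problem for $L_{\pm}(q)$ in the Fourier basis. First I would diagonalise the reference operators $D_{\pm}^{2}$ in the exponential basis $\{e_{k}\}_{k\in\mathbb{Z}}$ that simultaneously resolves the periodic and antiperiodic problems on $[0,1]$, so that the $n$-th reference level is $n^{2}\pi^{2}$, carried by the two-dimensional reference eigenspace $P_{n}=\operatorname{span}\{e_{n},e_{-n}\}$ (with $L_{+}$, resp.\ $L_{-}$, governing the gaps of even, resp.\ odd, index). Writing $\lambda=n^{2}\pi^{2}+z$, the next step is to invoke the a priori localisation $\bigl|\lambda_{n}^{\pm}[L_{\pm}(q)]-n^{2}\pi^{2}-\widehat{q}(0)\bigr|\le C n^{\alpha}$, uniform on bounded subsets of $H_{+}^{-\alpha}[0,1]$, which confines the entire analysis to the disc $|z|\le C n^{\alpha}$. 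In these coordinates the equation $D_{\pm}^{2}u+q(x)u=\lambda u$ is an infinite linear system, which I would split according to $P_{n}$ and its orthogonal complement $Q_{n}$.

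On $\operatorname{ran}Q_{n}$ the free part $\operatorname{diag}(k^{2}\pi^{2})-\lambda$ is boundedly invertible, its $(k,k)$-entry being of order $1/|k^{2}-n^{2}|$ for $k\neq\pm n$, and the shift by $|z|\le Cn^{\alpha}$ does not destroy this for $n$ large. The core of the argument is then to prove that the operator $T_{n}$ obtained by composing convolution with $\widehat{q}$ followed by $Q_{n}\bigl(\operatorname{diag}(k^{2}\pi^{2})-\lambda\bigr)^{-1}Q_{n}$ is a contraction on a suitable weighted $\ell^{2}$-space, with norm $O(n^{\alpha-1+\varepsilon})$ uniformly on bounded sets of $q$ and for $|z|\le Cn^{\alpha}$; this is precisely the kind of bound furnished by the Convolution Lemma of \cite{KpMh,Mhr}, which controls sums such as $\sum_{k\neq\pm n}|\widehat{q}(n-k)|\,|n^{2}-k^{2}|^{-1}$ and their iterates in the relevant norms. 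Summing the Neumann series $(I-T_{n})^{-1}$ eliminates the non-resonant coordinates and reduces the problem to a $2\times2$ system
\begin{equation*}
  \bigl(z-\widehat{q}(0)-a_{n}(z)\bigr)\,c_{\pm n}=b_{n}^{\mp}(z)\,c_{\mp n},
\end{equation*}
in which $a_{n}$ and $b_{n}^{\pm}$ are given by the series obtained from expanding $(I-T_{n})^{-1}$ in powers of $q$.

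I would then expand these coefficients order by order. The zeroth and first order of the off-diagonal $b_{n}^{\pm}$ is $\widehat{q}(\pm n)$, the second order produces precisely the convolution $\omega(\pm n)$ of the statement, and the same convolution estimates show that the remaining tail defines a sequence in $h^{1-2\alpha-\varepsilon}$, while $\{a_{n}(z)\}_{n}$ lies in $h^{1-2\alpha-\varepsilon}$, everything uniformly for $|z|\le Cn^{\alpha}$ and $q$ in a bounded set; hence $b_{n}^{\pm}(z)=(\widehat{q}+\omega)(\pm n)+h^{1-2\alpha-\varepsilon}(n)$. Solving the secular equation $\bigl(z-\widehat{q}(0)-a_{n}(z)\bigr)^{2}=b_{n}^{+}(z)\,b_{n}^{-}(z)$ exhibits its two roots as $z=\widehat{q}(0)+a_{n}(z)\pm\sqrt{b_{n}^{+}(z)\,b_{n}^{-}(z)}$; since $a_{n}$ and $b_{n}^{\pm}$ vary slowly in $z$ over the disc (their $z$-derivatives being $O(n^{-1+\varepsilon})$), a contraction-mapping (equivalently, implicit-function) argument locates the roots $z_{\pm}^{(n)}$ and shows that freezing the arguments at $\tau_{n}$ costs only a further $h^{1-2\alpha-\varepsilon}$ term. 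Reading off the roots yields $\tau_{n}[L_{\pm}(q)]=n^{2}\pi^{2}+\widehat{q}(0)+h^{1-2\alpha-\varepsilon}(n)$ --- the second-order contribution to $\tau_{n}$ lies in $h^{1-\alpha}$, resp.\ $h^{3/2-2\alpha-\delta}$, hence is absorbed --- and $\gamma_{n}[L_{\pm}(q)]=2\sqrt{(\widehat{q}+\omega)(-n)\,(\widehat{q}+\omega)(n)}+h^{1-2\alpha-\varepsilon}(n)$, up to the choice of branch of the square root; the presence of $\min_{\pm}$ in the estimate for $\gamma_{n}[L_{\pm}(q)]$ encodes this branch ambiguity and the fact that $\gamma_{n}\ge0$ while the right-hand side need not carry a preferred sign. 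The stated memberships of $\omega$ are themselves the content of the Convolution Lemma.

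The step I expect to be the main obstacle is the non-resonant elimination for singular potentials: when $\alpha\in[1/2,1)$ the distribution $q\in H^{-\alpha}$ is genuinely rough, the product $q\,u$ must be read as the natural $H_{\pm}^{1}$--$H_{\pm}^{-1}$ pairing, and making $T_{n}$ a contraction --- together with the order-by-order control of $a_{n}$ and $b_{n}^{\pm}$ into exactly $h^{1-2\alpha-\varepsilon}$, $h^{1-\alpha}$ and $h^{3/2-2\alpha-\delta}$ --- rests on the sharp convolution bounds of \cite{KpMh,Mhr}, with their unavoidable $\varepsilon$/$\delta$ losses, and demands uniformity simultaneously in $n$, in $z$ over the disc, and in $q$ over bounded sets. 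Once those estimates are in hand, the residual $2\times2$ perturbation analysis and the fixed-point step in $z$ are routine.
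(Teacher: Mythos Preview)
The paper does not give its own proof of this proposition: it is stated as a result of Kappeler and M\"ohr and simply cited from \cite{KpMh,Mhr}, with the Convolution Lemma likewise referenced rather than proved. Your outline is a faithful sketch of the Kappeler--M\"ohr argument itself (Fourier/Lyapunov--Schmidt reduction to the two-dimensional resonant block, Neumann series elimination of the $Q_{n}$-component governed by the Convolution Lemma, and a fixed-point argument in $z$ to solve the $2\times2$ secular equation), so there is nothing to compare --- you are reproducing the source the paper appeals to, not offering an alternative route.
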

\begin{remark*}\label{rm_28_Prf}
In the papers \cite{Mlb1, MiMl1, MiMl2, Mlb2} more general operators
\begin{equation*}
  L_{m,\pm}(V):=D_{\pm}^{2m}+ V(x),\quad V(x)\in H_{+}^{-m}[0,1],\; m\in \mathbb{N}
\end{equation*}
on the spaces $H_{\pm}^{-m}[0,1]$ were studied. In particular, the analogue of Proposition \ref{pr_26_Prf} was proved.
\end{remark*}

The following statement is an essential point of our approach.
\begin{theorem}[Isospectral Theorem \cite{MiMl3, Mlb2}]\label{th_30_Prf}
The operators $S_{\pm}(q)$ and $L_{\pm}(q)$ are isospectral ones:
\begin{equation*}
  \mathrm{spec}\left(S_{\pm}(q)\right)=\mathrm{spec}\left(L_{\pm}(q)\right).
\end{equation*}
\end{theorem}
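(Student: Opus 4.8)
The plan is to show that the form-sum operators $S_{\pm}(q)$, which act on the Hilbert space $L^{2}(0,1)$ with the "quasi-differential" domain $\mathrm{Dom}\left(S_{\pm}(q)\right)=\left\{u\in H_{\pm}^{1}[0,1]\,\left|\, D_{\pm}^{2}u+q(x)u\in L^{2}(0,1)\right.\right\}$, and the operators $L_{\pm}(q)$ of Kappeler--M\"{o}hr, which act on the larger negative Sobolev space $H_{\pm}^{-1}[0,1]$ with the full domain $H_{\pm}^{1}[0,1]$, have exactly the same eigenvalues (with multiplicities). The natural route is to compare \emph{eigenfunctions}: a complex number $\lambda$ is an eigenvalue of $S_{\pm}(q)$ precisely when the quasi-differential equation $D_{\pm}^{2}u+q(x)u=\lambda u$ has a nontrivial solution $u\in H_{\pm}^{1}[0,1]$ for which in addition $D_{\pm}^{2}u+q(x)u\in L^{2}(0,1)$; but along any solution of that equation the left-hand side equals $\lambda u$, and $u\in H_{\pm}^{1}[0,1]\subset L^{2}(0,1)$ automatically, so the extra regularity requirement is \emph{vacuous}. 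Thus the eigenvalue equation for $S_{\pm}(q)$ is literally the same pointwise (distributional) equation $D_{\pm}^{2}u+q(x)u=\lambda u$ on $H_{\pm}^{1}[0,1]$ that defines eigenvalues of $L_{\pm}(q)$, and the two eigenspaces coincide. Since both operators are already known to have pure discrete spectrum with the same labelling (periodic for "+", semiperiodic for "$-$"), equality of all eigenvalues follows, and hence $\mathrm{spec}\left(S_{\pm}(q)\right)=\mathrm{spec}\left(L_{\pm}(q)\right)$.

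In more detail, the steps I would carry out are the following. First I would recall the precise meaning of $D_{\pm}^{2}u+q(x)u$ for $u\in H_{\pm}^{1}[0,1]$ and $q\in H_{+}^{-1}[0,1]$: the product $q\cdot u$ is well defined as a distribution on the circle (via Fourier series, using that multiplication $H^{1}\times H^{-1}\to H^{-1}$ is bounded for $1$-periodic functions), so $D_{\pm}^{2}u+q(x)u$ is an element of $H_{\pm}^{-1}[0,1]$ and the equation $D_{\pm}^{2}u+q(x)u=\lambda u$ makes sense there. Second, I would observe that the graph-norm domain of $S_{\pm}(q)$ consists of exactly those $u\in H_{\pm}^{1}[0,1]$ with $D_{\pm}^{2}u+q(x)u\in L^{2}(0,1)$, and that the action of $S_{\pm}(q)$ on such $u$ is by the same expression $D_{\pm}^{2}u+q(x)u$; this is precisely the content of the equivalence of the form-sum and quasi-differential definitions established in \cite{MiMl6}. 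Third, for the eigenvalue analysis: if $S_{\pm}(q)u=\lambda u$ with $u\neq 0$, then $u\in H_{\pm}^{1}[0,1]$ and $D_{\pm}^{2}u+q(x)u=\lambda u$, which is exactly the statement $L_{\pm}(q)u=\lambda u$; conversely if $L_{\pm}(q)u=\lambda u$ with $u\neq 0$, then $u\in H_{\pm}^{1}[0,1]$ and $D_{\pm}^{2}u+q(x)u=\lambda u\in L^{2}(0,1)$, so $u\in\mathrm{Dom}\left(S_{\pm}(q)\right)$ and $S_{\pm}(q)u=\lambda u$. Hence the point spectra and eigenspaces agree. Fourth, since both families have been shown (in \cite{MiMl6} for $S_{\pm}$, in \cite{KpMh, Mhr} for $L_{\pm}$, under $q\in H_{+}^{-\alpha}$, $\alpha\in[0,1)$, which covers our case) to have discrete spectrum accumulating only at $+\infty$, the coincidence of eigenvalues with multiplicities gives the full spectral identity.

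The main obstacle is not the formal matching of eigenvalue equations — that is essentially a bookkeeping exercise once the definitions are on the table — but rather making sure that \emph{the operator $S_{\pm}(q)$ really is given by the quasi-differential expression on its form-domain}, i.e. that the abstract form-sum construction $D_{\pm}^{2}\dotplus q$ coincides with the operator whose eigenfunctions are the $H_{\pm}^{1}$-solutions of $D_{\pm}^{2}u+q(x)u=\lambda u$. This is exactly the equivalence of the three definitions of $S(q)$ quoted in the Introduction and proved in \cite{MiMl6}, so in the write-up I would invoke that result and use it to replace the inconvenient form-sum description of $S_{\pm}(q)$ by the quasi-differential one; after that substitution the isospectrality is immediate. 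A secondary point to be careful about is multiplicity: because the periodic/semiperiodic problem is \emph{not} Birkhoff-strongly-regular, one should check that the (at most two-dimensional) eigenspaces are counted the same way in both pictures — but since the eigenspaces are literally equal as subspaces of $H_{\pm}^{1}[0,1]$, this is automatic.
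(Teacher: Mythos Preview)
Your argument is essentially the paper's own: one inclusion is trivial because $S_{\pm}(q)\subset L_{\pm}(q)$ as operators, and for the other you use exactly the same observation the paper uses, namely that if $L_{\pm}(q)u=\lambda u$ with $u\in H_{\pm}^{1}[0,1]$ then $D_{\pm}^{2}u+q(x)u=\lambda u\in L^{2}(0,1)$ automatically, so $u\in\mathrm{Dom}\left(S_{\pm}(q)\right)$.

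The one place where the paper goes a step further than you is in treating \emph{root vectors} of $L_{\pm}(q)$, not just eigenvectors. The operator $L_{\pm}(q):H_{\pm}^{1}[0,1]\to H_{\pm}^{-1}[0,1]$ is not a self-adjoint operator on a Hilbert space in the usual sense, so a priori its generalized eigenspaces could be strictly larger than its eigenspaces; the paper therefore checks that if $(L_{\pm}(q)-\lambda)f=g$ with $g$ an eigenvector (or a previously treated root vector), then $L_{\pm}(q)f=\lambda f+g\in H_{\pm}^{1}[0,1]\subset L^{2}(0,1)$, so $f\in\mathrm{Dom}\left(S_{\pm}(q)\right)$ as well, and iterates along the finite Jordan chain. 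Your remark that ``eigenspaces are literally equal'' covers only geometric multiplicity; since the eigenvalue labelling $\lambda_{n}^{\pm}$ used in Proposition~\ref{pr_26_Prf} and in the applications counts \emph{algebraic} multiplicity, this extra step is not cosmetic. It is, however, a one-line addition to your argument (and, incidentally, combined with the self-adjointness of $S_{\pm}(q)$ it shows a posteriori that $L_{\pm}(q)$ has no nontrivial root vectors at all).
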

\begin{proof}
The injections
\begin{equation*}
  \mathrm{spec}\left(S_{\pm}(q)\right)\subset \mathrm{spec}\left(L_{\pm}(q)\right)
\end{equation*}
are obvious since
\begin{equation*}
  S_{\pm}(q)\subset L_{\pm}(q).
\end{equation*}
Let prove the inverse injections
\begin{equation*}
  \mathrm{spec}\left(L_{\pm}(q)\right)\subset
  \mathrm{spec}\left(S_{\pm}(q)\right).
\end{equation*}
Let $\lambda\in \mathrm{spec}\left(L_{\pm}(q)\right)$, and $f$ be a correspondent eigenvector or rootvector. Therefore
\begin{equation*}
  \left(L_{\pm}(q)-\lambda Id\right)f=g,\quad f,g\in
  \mathrm{Dom}\left(L_{\pm}(q)\right)=H_{\pm}^{1}[0,1],
\end{equation*}
where $f$ is an eigenfunction if $g=0$, and a rootvector if $g\neq 0$.

So, we have got
\begin{equation*}
  L_{\pm}(q)f=\lambda Id f+g\in H_{\pm}^{1}[0,1],
\end{equation*}
i.e.,
\begin{equation*}
  L_{\pm}(q)f=D_{\pm}^{2}f+q(x)f\in L^{2}(0,1).
\end{equation*}
Thus we have proved that $f\in \mathrm{Dom}\left(S_{\pm}(q)\right)$. In the case when $f$ is a rootvector ($g\neq
0$) in a similar fashion we show that $g\in \mathrm{Dom}\left(S_{\pm}(q)\right)$ also. Continuing this process until necessary (draw attention that it is finite as the eigenvalue $\lambda$ has a finite algebraic multiplicity) we
obtain that all correspondent to $\lambda$ eigenvectors and rootvectors belong to the operators $S_{\pm}(q)$ domains
$\mathrm{Dom}\left(S_{\pm}(q)\right)$. Consequently we can conclude that
\begin{equation*}
  \lambda\in \mathrm{spec}\left(S_{\pm}(q)\right),
\end{equation*}
and the required injections
\begin{equation*}
  \mathrm{spec}\left(L_{\pm}(q)\right)\subset \mathrm{spec}\left(S_{\pm}(q)\right)
\end{equation*}
have been proved.

The proof is complete.
\end{proof}

Now, Theorem \ref{th_12_IntMR} follows from Proposition \ref{pr_26_Prf}, the Isospectral Theorem \ref{th_30_Prf} and
\cite[Theorem~C]{MiMl6}, since
\begin{align*}
  \widehat{q}(n) & =\overline{\widehat{q}(-n)},\quad  n\in \mathbb{Z}, \\
  \omega(n) & =\overline{\omega(-n)},\quad  n\in \mathbb{Z},
\end{align*}
and as a consequence
\begin{equation*}
  \min_{\pm}\left|\gamma_{n}(q)\pm 2\sqrt{\left(\widehat{q}+\omega\left)(-n)\right(\widehat{q}+
  \omega\right)(n)}\right|=\left|\gamma_{n}(q)-2\left|\left(\widehat{q}+\omega\right)(n)\right|\right|.
\end{equation*}

The proof of Theorem \ref{th_12_IntMR} is complete.

To prove Theorem \ref{th_18_IntMR} we firstly prove its Corollary. The formula \eqref{eq_31_IntMR} follows from \cite[Corollary 0.2 (2.6)]{KpMh}, the Isospectral Theorem \ref{th_30_Prf} and \cite[Theorem C]{MiMl6}. Also it can be proved directly as well as \cite[Corollary~0.2~(2.6)]{KpMh} using the estimates \eqref{eq_24_IntMR}.

Further, to prove Theorem \ref{th_18_IntMR} it is sufficient to apply the asymptotic estimates \eqref{eq_24_IntMR}, the
properties \eqref{eq_27.1_IntMR} and \eqref{eq_27.2_IntMR} of the refined scales, and the formula \eqref{eq_31_IntMR}:
\begin{align*}
 &\mbox{\textbullet}\; & q\in  H_{1\mbox{-}per}^{s,\varphi}\left(\mathbb{R},\mathbb{R}\right)
 &\overset{\eqref{eq_27.1_IntMR}}{\Longrightarrow} q\in
 H_{1\mbox{-}per}^{s-\delta}\left(\mathbb{R},\mathbb{R}\right),\;\delta>0\overset{\eqref{eq_24_IntMR}}{\Longrightarrow}
 \gamma_{n}=2\left|\widehat{q}(n)\right|+h^{1+2(s-\delta)-\varepsilon}(n) \\
&&& \overset{\eqref{eq_27.2_IntMR}}{\Longrightarrow}\gamma_{n}=2\left|\widehat{q}(n)\right|+h^{s,\varphi}(n)\Longrightarrow
  \{\gamma_{n}(q)\}_{n=1}^{\infty}\in h^{s,\varphi}; \\
 &\mbox{\textbullet}\; & \{\gamma_{n}(q)\}_{n=1}^{\infty}\in h^{s,\varphi}&\overset{\eqref{eq_27.2_IntMR}}{\Longrightarrow}
 \{\gamma_{n}\}_{n=1}^{\infty}\in h^{s-\delta},\;\delta>0\overset{\eqref{eq_31_IntMR}}{\Longrightarrow}
 q\in  H_{1\mbox{-}per}^{s-\delta}\left(\mathbb{R},\mathbb{R}\right) \\
&&& \overset{\eqref{eq_24_IntMR}}{\Longrightarrow} \gamma_{n}=2\left|\widehat{q}(n)\right|+h^{1+2(s-\delta)-\varepsilon}(n)
 \overset{\eqref{eq_27.2_IntMR}}{\Longrightarrow}\gamma_{n}=2\left|\widehat{q}(n)\right|+h^{s,\varphi}(n) \\
&&& \Longrightarrow \{\widehat{q}(n)\}_{n\in \mathbb{Z}}\in h^{s,\varphi}(n).
\end{align*}
Remark that due to arbitrary choice of $\delta>0$ and $\varepsilon>0$ we may choose them such that
\begin{equation*}
 1+s-2\delta-\varepsilon>0.
\end{equation*}

The proof of Theorem \ref{th_18_IntMR} is complete.
Now, we are ready to prove Theorem \ref{th_19_IntMR}.

At first, note that from \cite[Theorem 1.2]{KpMt} we get the following asymptotic formulae for the lengths of spectral gaps:
\begin{equation}\label{eq_40_Prf}
  \gamma_{n}(q)=2\left|\widehat{q}(n)\right|+h^{1+s}(n)\quad\text{as}\quad q(x)\in
  H_{1\mbox{-}per}^{s}\left(\mathbb{R},\mathbb{R}\right),\;s\in [0,\infty),
\end{equation}
which for the integer numbers $s\in \mathbb{Z}_{+}$ were proved by Marchenko and Ostrovskii \cite{MrOs}.

Using \eqref{eq_31_IntMR}, \eqref{eq_40_Prf} and \eqref{eq_22_IntMR} it is easy to prove the relationship
\begin{equation}\label{eq_42_Prf}
  q(x)\in H_{1\mbox{-}per}^{s}\left(\mathbb{R},\mathbb{R}\right)\Leftrightarrow
  \{\gamma_{n}(q)\}_{n=1}^{\infty}\in h^{s},\quad s\in (-1,\infty).
\end{equation}

\textit{Sufficiency} of Theorem \ref{th_19_IntMR}. Let $q(x)\in
H_{1\mbox{-}per}^{s,\varphi}\left(\mathbb{R},\mathbb{R}\right)$. If $s\in (-1,0]$ then due to Theorem \ref{th_18_IntMR} we
obtain that $ \{\gamma_{n}(q)\}_{n=1}^{\infty}\in h^{s,\varphi}$. If $s>0$ then
\begin{align*}
  q(x)\in H_{1\mbox{-}per}^{s,\varphi}\left(\mathbb{R},\mathbb{R}\right)&\overset{\eqref{eq_27.1_IntMR}}{\hookrightarrow}
  H_{1\mbox{-}per}^{s-\delta}\left(\mathbb{R},\mathbb{R}\right),\;\delta>0\overset{\eqref{eq_40_Prf}}{\Longrightarrow}
  \gamma_{n}(q)=2\left|\widehat{q}(n)\right|+h^{1+s-\delta}(n) \\
  & \overset{\eqref{eq_27.2_IntMR}}{\Longrightarrow}\gamma_{n}(q)=2\left|\widehat{q}(n)\right|+h^{s,\varphi}(n)
  \Longrightarrow \{\gamma_{n}(q)\}_{n=1}^{\infty}\in h^{s,\varphi}.\hspace{50pt}\mbox{}
\end{align*}

Sufficiency is proved.

\textit{Necessity} of Theorem \ref{th_19_IntMR}. Let suppose that $\{\gamma_{n}(q)\}_{n=1}^{\infty}\in h^{s,\varphi}$. If
$s\in (-1,0]$ then from Theorem \ref{th_18_IntMR} we have that $q(x)\in
H_{1\mbox{-}per}^{s,\varphi}\left(\mathbb{R},\mathbb{R}\right)$. If $s>0$ then
\begin{align*}
 \{\gamma_{n}(q)\}_{n=1}^{\infty}\in h^{s,\varphi}&\overset{\eqref{eq_27.2_IntMR}}{\hookrightarrow}h^{s-\delta},\;\delta>0
 \overset{\eqref{eq_42_Prf}}{\Longrightarrow} q(x)\in H_{1\mbox{-}per}^{s-\delta}\left(\mathbb{R},\mathbb{R}\right)
 \overset{\eqref{eq_40_Prf}}{\Longrightarrow}\gamma_{n}(q)=2\left|\widehat{q}(n)\right|+h^{1+s-\delta}(n) \\
 & \overset{\eqref{eq_27.2_IntMR}}{\Longrightarrow}\gamma_{n}(q)=2\left|\widehat{q}(n)\right|+h^{s,\varphi}(n)
 \Longrightarrow q(x)\in H_{1\mbox{-}per}^{s,\varphi}\left(\mathbb{R},\mathbb{R}\right).
\end{align*}

Necessity is proved.

The proof of Theorem \ref{th_19_IntMR} is complete.
\section{Concluding remarks}
In fact we can prove the following result: if $q(x)\in  H_{1\mbox{-}per}^{-1+}\left(\mathbb{R},\mathbb{R}\right)$
and
\begin{align*}
 (1+2|k|)^{s}& \ll w(k)\ll (1+2|k|)^{1+2s},\quad s\in (-1,0], \\
 (1+2|k|)^{s}& \ll w(k)\ll (1+2|k|)^{1+s},\quad s\in [0,\infty),
\end{align*}
then
\begin{equation*}
 q(x)\in H_{1\mbox{-}per}^{w}\left(\mathbb{R},\mathbb{R}\right)\Leftrightarrow
  \{\gamma_{n}(q)\}_{n=1}^{\infty}\in h^{w}.
\end{equation*}

This result is not covered by the theorems of the preprint
\cite{DjMt3}, because it does not require the weight function to
be monotonic and submultiplicative.
\section*{Appendix}
The complex Sobolev spaces $H_{1\mbox{-}per}^{s}(\mathbb{R})$, $s\in \mathbb{R}$ of 1-periodic func\-tions and dis\-tri\-butions
over the real axis $\mathbb{R}$ are defined by means of their Fourier coefficients,
\begin{align*}
  H_{1\mbox{-}per}^{s}(\mathbb{R}) & :=\left\{f=\sum_{k\in \mathbb{Z}}\widehat{f}(k)e^{i k2\pi
  x}\left|\;\parallel f\parallel_{H_{1\mbox{-}per}^{s}(\mathbb{R})}<\infty\right.\right\}, \\
  \parallel f\parallel_{H_{1\mbox{-}per}^{s}(\mathbb{R})} & :=\left(\sum_{k\in \mathbb{Z}}
  \langle 2k\rangle^{2s}|\widehat{f}(k)|^{2}\right)^{1/2},\quad \langle k\rangle:=1+|k|, \\
  \widehat{f}(k) & :=\langle f,e^{i  k2\pi x}\rangle_{L_{1\mbox{-}per}^{2}(\mathbb{R})},\quad k\in \mathbb{Z}.
\end{align*}
By $\langle \cdot,\cdot\rangle_{L_{1\mbox{-}per}^{2}(\mathbb{R})}$
we denote a sesqui-linear form pairing the dual spaces
$H_{1\mbox{-}per}^{s}(\mathbb{R})$ and
$H_{1\mbox{-}per}^{-s}(\mathbb{R})$ with respect to $L_{1\mbox{-}per}^{2}(\mathbb{R})$, which (the sequi-linear form
$\langle \cdot,\cdot\rangle_{L_{1\mbox{-}per}^{2}(\mathbb{R})}$) is
an extension by continuity of the
$L_{1\mbox{-}per}^{2}(\mathbb{R})$-inner product \cite{Brz, GrGr}:
\begin{equation*}
  \langle f,g\rangle_{L_{1\mbox{-}per}^{2}(\mathbb{R})}:=\int_{0}^{1}f(x)\overline{g(x)}\,dx
  =\sum_{k\in \mathbb{Z}}\widehat{f}(k)\overline{\widehat{g}(k)}\quad
  \forall f,g\in L_{1\mbox{-}per}^{2}(\mathbb{R}).
\end{equation*}
It is useful to notice that
\begin{equation*}
  H_{1\mbox{-}per}^{0}(\mathbb{R})\equiv L_{1\mbox{-}per}^{2}(\mathbb{R}).
\end{equation*}

By $H_{1\mbox{-}per}^{s+}(\mathbb{R})$, $s\in \mathbb{R}$ we denote an inductive limit of the Sobolev spaces
$H_{1\mbox{-}per}^{t}(\mathbb{R})$ with $t>s$,
\begin{equation*}
  H_{1\mbox{-}per}^{s+}\left(\mathbb{R}\right):=\bigcup_{\varepsilon>0}H_{1\mbox{-}per}^{s+\varepsilon}\left(\mathbb{R}\right).
\end{equation*}
It is a topological space with an inductive topology.

In a similar fashion the Sobolev spaces $H_{\pm}^{s}[0,1]$, $s\in \mathbb{R}$ of
1-periodic/1-semiperiodic functions and distributions over the interval $[0,1]$ are defined:
\begin{align*}
  H_{\pm}^{s}[0,1] & :=\left\{f=\sum_{k\in \Gamma_{\pm}}\widehat{f}\left(\frac{k}{2}\right)e^{i k\pi
  x}\left|\;\parallel f\parallel_{H_{\pm}^{s}[0,1]}<\infty\right.\right\}, \\
  \parallel f\parallel_{H_{\pm}^{s}[0,1]}&:=\left(\sum_{k\in \Gamma_{\pm}}
  \langle k\rangle^{2s}\left|\widehat{f}\left(\frac{k}{2}\right)\right|^{2}\right)^{1/2},\quad \langle k\rangle=1+|k|, \\
  \widehat{f}\left(\frac{k}{2}\right) &:=\langle f(x),e^{i k\pi x}\rangle_{\pm}, \quad k\in \Gamma_{\pm}.
\end{align*}
Here
\begin{align*}
  \Gamma_{+}  \equiv 2\mathbb{Z} & :=\left\{ k\in\mathbb{Z}\; \left|\;k\equiv 0\; (\mathrm{mod}\,2)\right.\right\}, \\
  \Gamma_{-}  \equiv 2\mathbb{Z}+1 & :=\left\{ k\in\mathbb{Z}\; \left|\;k\equiv 1\; (\mathrm{mod}\,2)\right.\right\},
\end{align*}
and $\langle\cdot,\cdot\rangle_{\pm}$ are sesqui-linear forms pairing the dual spaces
$H_{\pm}^{s}[0,1]$ and $H_{\pm}^{-s}[0,1]$ with respect to $L^{2}(0,1)$, which (the sesqui-linear forms $\langle\cdot,\cdot\rangle_{\pm}$) are
extensions by continuity of the $L^{2}(0,1)$-inner product \cite{Brz, GrGr}:
\begin{equation*}
  \langle f,g\rangle_{\pm}:=\int_{0}^{1}f(x)\overline{g(x)}\,dx =\sum_{k\in
  \Gamma_{\pm}}\widehat{f}\left(\frac{k}{2}\right)\overline{\widehat{g}\left(\frac{k}{2}\right)}\quad \forall f,g\in L^{2}(0,1).
\end{equation*}
It is obvious that
\begin{equation*}
  H_{+}^{0}[0,1]\equiv H_{-}^{0}[0,1]\equiv L^{2}(0,1).
\end{equation*}

We say that 1-periodic function or distribution $f(x)$ is \textit{real-valued} if $\mathrm{Im}\,f(x)=0$. Let us remind
that
\begin{equation*}
  \mathrm{Re}\,f(x):=\frac{1}{2}(f(x)+\overline{f(x)}),\quad
  \mathrm{Im}\,f(x):=\frac{1}{2i}(f(x)-\overline{f(x)}),
\end{equation*}
(see, for an example, \cite{Vld}). In terms of the Fourier coefficients we have
\begin{equation*}
   \mathrm{Im}\,f(x)=0\Leftrightarrow\widehat{f}(k)=\overline{\widehat{f}(-k)},\quad  k\in \mathbb{Z}.
\end{equation*}
Set
\begin{align*}
  H_{1\mbox{-}per}^{s}(\mathbb{R},\mathbb{R}) & :=\left\{f(x)\in H_{1\mbox{-}per}^{s}(\mathbb{R}) \left|\,\mathrm{Im}\,f(x)=0
  \right.\right\}, \\
  H_{1\mbox{-}per}^{s+}(\mathbb{R},\mathbb{R}) & :=\left\{f(x)\in H_{1\mbox{-}per}^{s+}(\mathbb{R}) \left|\,\mathrm{Im}\,f(x)=0
  \right.\right\}, \\
  H_{\pm}^{s}\left([0,1],\mathbb{R}\right) & :=\left\{f(x)\in H_{\pm}^{s}[0,1] \left|\,\mathrm{Im}\,f(x)=0
  \right.\right\}.
\end{align*}

Also we will need the Hilbert sequence spaces
\begin{equation*}
    h^{s}\equiv h^{s}\left(\mathbb{Z},\mathbb{C}\right),\quad s\in \mathbb{R}
\end{equation*}
of (two-sided) weighted sequences,
\begin{align*}
  h^{s} & :=\left\{a=\left\{a(k)\right\}_{k\in \mathbb{Z}}\,\left|\,\|a\|_{h^{s}}:=
  \left(\sum_{k\in \mathbb{Z}}\langle k\rangle^{2s}|a(k)|^{2}\right)^{1/2}<\infty\right.\right\},\quad \langle
  k\rangle=1+|k|.
\end{align*}
Note that
\begin{equation*}
  h^{0}\equiv l^{2}\left(\mathbb{Z},\mathbb{C}\right),
\end{equation*}
and
\begin{equation*}
  a=\left\{a(k)\right\}_{k\in \mathbb{Z}}\in h^{s},\;s\in \mathbb{R}\qquad\Rightarrow\qquad a(k)=o(|k|^{-s}),
  \quad k\rightarrow \pm\infty.
\end{equation*}


\end{document}